\newcounter{Definitioncount}
\newtheorem{theorem}{Theorem}% with counter
\newtheorem{proposition}[theorem]{Proposition}
\theoremstyle{definition}
\newtheorem{remark}[theorem]{Remark}
\newtheorem{example}[theorem]{Example}
\newtheoremstyle{fact}{\bigskipamount}{\medskipamount}{\upshape}{}{\itshape}{. }{ }{Fact}
\theoremstyle{fact}
\newtheoremstyle{genquest}{\bigskipamount}{\medskipamount}{\upshape}{}{\itshape}{. }{ }{General Question}
\theoremstyle{genquest}
\newtheoremstyle{step}{2\bigskipamount}{\medskipamount}{\upshape}{}{\itshape}{. }{ }{\underline{Step~\thestep}}
\theoremstyle{step}
\renewcommand{\thestep}{\arabic{step}}
\newcommand{\lra}{\longrightarrow}
\newcommand{\Lra}{\Longrightarrow}
\newcommand{\ldual}[1]{\mathord{{\let\nolimits\relax\sideset{^\wedge}{}{#1}}}}
\newcommand{\laction}[2]{\mathord{{\let\nolimits\relax\sideset{^{#1}}{}{#2}}}}
\newcommand{\conj}[2]{\mathord{{\let\nolimits\relax\sideset{^{#1}}{}{#2}}}}
\def\CA{{\mathscr A}}
\def\CC{{\mathscr C}}
\def\CI{{\mathscr I}}
\def\CK{{\mathscr K}}
\def\CT{{\mathscr T}}
\def\CV{{\mathscr V}}
\def\CW{{\mathscr W}}
\def\CX{{\mathscr X}}
\begin{document}

\author{Ross Street\footnote{This author gratefully acknowledges the support of Australian Research Council Discovery Grant DP1094883.}}

\title{Skew-closed categories}
\date{1 September 2012}
\maketitle

\noindent {\small{\emph{2010 Mathematics Subject Classification:} 18D10; 18D05; 16T15}}
\\
{\small{\emph{Key words and phrases:} closed category; monoidal category; enriched category; skew-monoidal category; Yoneda lemma.}}

\begin{abstract}
\noindent Spurred by the new examples found by Kornel Szlach\'anyi of a form of lax monoidal category, the author felt the time ripe to publish a reworking of Eilenberg-Kelly's original paper on closed categories appropriate to the laxer context. 
The new examples are connected with bialgebroids. 
With Stephen Lack, we have also used the concept to give an alternative definition of quantum category and quantum groupoid. Szlach\'anyi has called the lax notion {\em skew monoidal}. 
This paper defines {\em skew closed category}, proves Yoneda lemmas for categories enriched over such, and looks at closed cocompletion.  
   
\end{abstract}

\tableofcontents

\section{Introduction}\label{Intro}
\noindent While the concept of monoidal category has a longer history (for example, see \cite{Rice} and \cite{Kelly1964}), the terminology and the related concept of closed category were introduced by Eilenberg-Kelly in \cite{EilKel1966}.

Kornel Szlach\'anyi \cite{Szl2012} recently used the term skew-monoidal category for a particular laxified version of monoidal category. 
He showed that bialgebroids $H$  with base ring $R$ (a concept from \cite{Takeuchi1977} and studied under this name in \cite{Szl2003}) could be characterized in terms of skew-monoidal structures on the category of one-sided $R$-modules for which the lax unit was $R$ itself. 
More generally, it is shown in \cite{SkMon} that quantum categories \cite{QCat} with base comonoid $C$ in a suitably complete braided monoidal category $\CV$ are precisely skew-monoidal objects in $\mathrm{Comod} (\CV)$ with unit coming from the counit of $C$. 

A left skew-monoidal structure on a category $\CV$ consists of a tensor product functor $-\otimes - : \CV \otimes \CV \lra \CV$, a unit object $I$, and natural transformations $a:(A\otimes B)\otimes C \lra A\otimes (B\otimes C)$, $\ell : I\otimes A \lra A$ and $r: A \lra A\otimes I$, subject to five axioms.

In the present paper, we introduce the related concept of skew-closed category and prove  weak and strong Yoneda lemmas for categories enriched in one. 
This work was essentially done in the 1980s or earlier but remained unpublished for lack of examples. 
The author was quite surprised that a Yoneda lemma could be proved without invertibility of the associativity and unit constraints. 
The interest back then was quite pedagogical: the invertibility of the associativity constraint of the monoidal structure was at odds with the closed structure. 
In view of \cite{Szl2012} and \cite{SkMon}, the topic seems to have a renewed interest. 
Most of the ideas and calculations here are adaptations of those of \cite{EilKel1966}. 
Some involve more recent concepts.

Thanks to Ignacio Lopez Franco for noticing an error in axiom~\eqref{SCC4} in an earlier version of the paper. It was the corrected axiom that was actually used in the proofs of Example~\ref{underlying} and Proposition~\ref{closed+monoidal}.   

\section{Skew-closed categories}\label{Scc}

A {\em (left) skew-closed category} $\CV$ consists of the following data:
\begin{itemize}
\item[(i)] 
a category $\CV$,
\item[(ii)] 
a functor  $ [ -, - ] : \CV^{\mathrm{op}} \times \CV \lra \CV$,
\item[(iii)]
an object $I$ of $\CV$,
\item[(iv)]
a natural transformation $i=i_A:[I,A]\lra A$,
\item[(v)]
a natural transformation $j=j_A:I\lra [A,A]$,
\item[(vi)]
a natural transformation $L=L_{B,C}^A : [B,C] \lra [[A,B],[A,C]]$ in $\CV$,
\end{itemize}
satisfying the following five axioms.

\begin{equation}\label{SCC1}
\begin{aligned}
\xymatrix{
& [[A,C],[A,D]] \ar[rd]^-{ L}  & \\
[C,D] \ar[ru]^-{L} \ar[d]_-{L} & & [[[A,B],[A,C]],[[A,B],[A,D]]] \ar[d]^-{[L,1]} \\
[[B,C],[B,D]] \ar[rr]_-{[1,L]} & & [[B,C],[[A,B],[A,D]]] }
\end{aligned}
\end{equation}

\begin{equation}\label{SCC2}
\begin{aligned}
\xymatrix{
[[A,A],[A,C]] \ar[rr]^-{[j,1]}  && [I,[A,C]] \ar[d]^-{i} \\
[A,C] \ar[rr]_-{1} \ar[u]^-{L} && [A,C]}
\end{aligned}
\end{equation}

\begin{equation}\label{SCC3}
\begin{aligned}
\xymatrix{
[B,B] \ar[rr]^{L}   && [[A,B],[A,B]] \\
& I\ar[lu]^{j} \ar[ru]_{j}  & }
\end{aligned}
\end{equation}

\begin{equation}\label{SCC4}
\begin{aligned}
\xymatrix{
[B,C] \ar[rr]^{L} \ar[rd]_{[i,1]}   && [[I,B],[I,C]] \ar[ld]^{[1,i]}\\
& [[I,B],C]   & }
\end{aligned}
\end{equation}

\begin{equation}\label{SCC5}
\begin{aligned}
\xymatrix{
I \ar[rd]_{1}\ar[rr]^{j}   && [I,I] \ar[ld]^{i} \\
& I  &
}\end{aligned}
\end{equation}

Call $\CV$ {\em left normal} when the composite function
$$\CV (A,B) \stackrel{[A,-]}\lra \CV ([A,A],[A,B]) \stackrel{\CV (j,1)}\lra \CV (I,[A,B])$$
is invertible.

\begin{example} If $\CV$ is a left skew-monoidal category for which each functor $-\otimes B : \CV \lra \CV$ has a right adjoint $[B,-] : \CV \lra \CV$ (that is, $\CV$ is left closed) then $\CV$ becomes a left skew-closed category.  The converse is also true and we shall look into that more fully in Section~\ref{TensHom}. This is of particular interest since that converse is not true in the non-skew setting of \cite{EilKel1966}. Notice that, in the case under consideration, $\CV$ is left normal if and only if the left unit constraint $\ell : I\otimes A\lra A$ is invertible.
\end{example}

Let $\CV$ and $\CW$ be left skew-closed categories. 

A functor $F:\CV \lra\CW$ is defined to be {\em closed}  when it is equipped with a morphism 
$$\psi_0 : I \lra FI $$ and a natural transformation 
$$\psi_{A,B} : F[A,B] \lra [FA,FB] $$
satisfying the following three axioms.

\begin{equation}\label{SCF1}
\begin{aligned}
\xymatrix{
[FI,FA] \ar[rr]^-{[\psi_0,1]}  && [I,FA] \ar[d]^-{i} \\
F[I,A] \ar[rr]_-{Fi} \ar[u]^-{\psi} && FA}
\end{aligned}
\end{equation}

\begin{equation}\label{SCF2}
\begin{aligned}
\xymatrix{
FI \ar[rr]^-{Fj}  && F[A,A] \ar[d]^-{\psi} \\
I \ar[rr]_-{j} \ar[u]^-{\psi_0} && [FA,FA]}
\end{aligned}
\end{equation}

\begin{equation}\label{SCF3}
\begin{aligned}
\xymatrix{
& F[[A,B],[A,C]] \ar[rd]^-{\phantom{A}\psi}  & \\
F[B,C] \ar[ru]^-{FL\phantom{A}} \ar[d]_-{\psi} & & [F[A,B],F[A,C]] \ar[d]^-{[1,\psi]} \\
[FB,FC] \ar[rd]_-{L} & & [F[A,B],[FA,FC]]  \\
& [[FA,FB],[FA,FC]]  \ar[ru]_-{[\psi,1]} }
\end{aligned}
\end{equation}

\begin{example}\label{underlying}
For any skew-closed $\CV$, there is a closed functor 
$$\mathrm{V} : \CV \lra \mathrm{Set}$$ 
defined as follows. As a functor $\mathrm{V} = \CV (I,-)$ is represented by $I$. 
The function $\psi_0 : 1\lra  \mathrm{V}I = \CV (I,I)$ takes the single element of $1$
to the identity morphism $1_I : I \lra I$. The function 
$\psi :  \mathrm{V}[A,B] \lra [ \mathrm{V}A, \mathrm{V}B]$ takes a morphism $t:I \lra [A,B]$ to the function $\CV(I,A)\lra \CV (I,B)$ taking $a:I\lra A$ to the composite 
$$I \stackrel{t}\lra [A,B] \stackrel{[a,1]}\lra [I,B] \stackrel{i}\lra B \ .$$
In this case, \eqref{SCF1} is a tautology and \eqref{SCF2} amounts to \eqref{SCC5}.  Commutativity of \eqref{SCF3} for $F=\mathrm{V}$ amounts to showing that
 $$i_C [f,1] [[a,1],1] [i_B,1] g = i_C[1,i_C] [1,[a,1]] [f,1] L^A g$$
 for $f:I\lra [A,B]$, $g:I\lra [B,C]$ and $a:I\lra A$; this uses functoriality of 
 $[-,-]$, naturality of $L$, and \eqref{SCC4}.
\end{example}

A natural transformation $\sigma : F \Lra G : \CV \lra \CW$ is said to be {\em closed} when the following two axioms hold.

\begin{equation}\label{SCNT1}
\begin{aligned}
\xymatrix{
& I \ar[ld]_{\psi_0} \ar[rd]^{\psi_0} \\
FI\ar[rr]_{\sigma_I} & & GI
}
\end{aligned}
\end{equation}

\begin{equation}\label{SCNT2}
\begin{aligned}
\xymatrix{
F[A,B] \ar[rr]^-{\psi} \ar[d]_-{\sigma_{[A,B]}} &  & [FA,FB] \ar[d]^-{[1,\sigma_B]} \\
G[A,B] \ar[r]_-{\psi} & [GA,GB] \ar[r]_-{[\sigma_A,1]} & [FA,GB] }
\end{aligned}
\end{equation}

In this way, we have defined a 2-category $\mathrm{SCC}$ of left skew-closed categories with an underlying 2-functor $\mathrm{SCC}\lra \mathrm{Cat}$.

A comonad (in the sense of \cite{FTM}) in the 2-category $\mathrm{SCC}$ is called a {\em closed comonad} on a left skew-closed category $\CV$. 
It consists of a comonad $G:\CV \lra \CV$, $\delta : G\Lra GG$, $\varepsilon :1_{\CV} \Lra G$ which are a closed functor and closed natural transformations. 

Apart from providing examples of skew-closed categories, the next result makes the connection with \cite{ChLaSt} and \cite{SkMon}. 

\begin{proposition}
Suppose $G$ is a closed comonad on a left skew-closed category $\CV$.
Then another left skew-closed structure is defined as follows: 
\begin{itemize}
\item[(i)]
the category $\CV$,
\item[(ii)] 
the hom functor  $ < -, ? > = [G-,?] : \CV^{\mathrm{op}} \times \CV \lra \CV$,
\item[(iii)]
the unit object $I$ of $\CV$,
\item[(iv)]
the natural transformation 
$$[GI,A] \stackrel{[\psi_0,1]}\lra [I,A] \stackrel{i} \lra A \ ,$$
\item[(v)]
the natural transformation 
$$I \stackrel{j_A}\lra [A,A] \stackrel{[\varepsilon_A,1]} \lra [GA,A] \ ,$$
\item[(vi)]
the natural transformation 
$$[GA,B] \stackrel{L^{GC}}\lra [[GC,GA],[GC,B]] \stackrel{[v_{C,A},1]} \lra [G[GC,A],[GC,B]]   $$
where $v_{C,A} :G[GC,A] \lra [GC,GA]$ is the composite
$$ G[GC,A] \stackrel{\psi_{GC,A}}\lra [GGC,GA] \stackrel{[\delta_C,1]} \lra [GC,GA] \ .$$ 
\end{itemize}
\end{proposition}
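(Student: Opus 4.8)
The plan is to verify, one at a time, that the six listed items satisfy the well-definedness requirements and then the five axioms \eqref{SCC1}--\eqref{SCC5}; throughout I write $\langle A,B\rangle=[GA,B]$ for the new hom object and note that its contravariant action on a morphism $f$ is $\langle f,1\rangle=[Gf,1]$. Well-definedness is immediate: $\langle-,-\rangle$ is the composite of $G^{\mathrm{op}}\times 1$ with $[-,-]$, and $i'$, $j'$, $L'$ are natural transformations because they are pasted from the natural transformations $i,j,L,\psi,\delta,\varepsilon$, the morphism $\psi_0$, and the map $v_{C,A}$, which itself is natural in $C$ and in $A$ by naturality of $\psi$ and of $\delta$. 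The real content is that each of \eqref{SCC1}--\eqref{SCC5} for the new data, after $i'$, $j'$ and $L'$ are expanded, reduces to the hypotheses: the corresponding axiom(s) for $\CV$, the closed-functor axioms \eqref{SCF1}--\eqref{SCF3} for $G$, the closed-natural-transformation axioms \eqref{SCNT1}--\eqref{SCNT2} for $\varepsilon$ and for $\delta$, the comonad identities $\varepsilon_{GC}\cdot\delta_C=1_{GC}=G\varepsilon_C\cdot\delta_C$ and $\delta_{GC}\cdot\delta_C=G\delta_C\cdot\delta_C$, and naturality.

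Next I would record two lemmas about the comparison $v_{C,A}\colon G[GC,A]\to[GC,GA]$: that $[GC,\varepsilon_A]\circ v_{C,A}=\varepsilon_{[GC,A]}$ and that $v_{C,A}\circ G[\varepsilon_C,1]=\psi_{C,A}$, each obtained by substituting the definition of $v$ and using naturality of $\psi$ together with a counit law of $G$ (the first one also using \eqref{SCNT2} for $\varepsilon$ to move $\varepsilon$ past $\psi$). With these in hand the three ``small'' axioms are quick. Axiom \eqref{SCC5} for the new structure unwinds to $i_I\circ[\varepsilon_I\circ\psi_0,1]\circ j_I$, and $\varepsilon_I\circ\psi_0=1_I$ by \eqref{SCNT1} for $\varepsilon$, so it reduces to \eqref{SCC5} for $\CV$. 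Axiom \eqref{SCC2} for the new structure, after expansion and the observation that $v_{A,A}\circ G j'_A\circ\psi_0=\psi_{A,A}\circ G j_A\circ\psi_0=j_{GA}$ (using the second lemma and then \eqref{SCF2}), becomes \eqref{SCC2} for $\CV$ at the objects $GA$ and $C$. Axiom \eqref{SCC3}, after using naturality of $L$ to pull $\varepsilon_B$ out of $j'_B$ and the first lemma to identify $[GA,\varepsilon_B]\circ v_{A,B}$ with $\varepsilon_{[GA,B]}$, reduces to \eqref{SCC3} for $\CV$ at $GA$ and $B$. Axiom \eqref{SCC4} is of the same flavour: the two occurrences of $i'$ introduce $\psi_0$, which is absorbed by \eqref{SCF1} (the compatibility of $\psi$ and $\psi_0$ with $i$) and a counit law, after which \eqref{SCC4} for $\CV$ finishes it.

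The main obstacle is axiom \eqref{SCC1}, the pentagon-type diagram, in which $L'$ occurs four times. My plan is to expand each $L'$ as $[v,1]\circ L$ and handle the result in two layers. The ``$L$-skeleton'' obtained by deleting the $v$'s does not match \eqref{SCC1} for $\CV$ directly, because the new structure forces the exponent $G\langle A,B\rangle=G[GA,B]$ where \eqref{SCC1} for $\CV$ supplies $[GA,GB]$ in that position; the comparison $v_{A,B}$ is exactly the bridge, and once the $v$'s are inserted in the right spots the skeleton commutes by \eqref{SCC1} for $\CV$ at the exponents $GA$ and $GB$. The core of the work is checking that those inserted $v$'s slide correctly through the surrounding $L$'s, and this is where the functoriality data of $G$ are used: naturality of $L$ (to carry a $[v,1]$ past an $L$, at the cost of wrapping $v$ in a further $[GA,-]$), the hexagon \eqref{SCF3} for $G$ (to move the $\psi$ hidden inside a $v$ past an $L$, producing a $GL$-term), the axiom \eqref{SCNT2} for $\delta$ (to recognise, when two $v$'s meet, the composite closed structure of $GG$ and replace it by a single $\psi$ with $\delta$'s), and, at the very end, the coassociativity $\delta_{GC}\cdot\delta_C=G\delta_C\cdot\delta_C$ to reconcile the two ways a double comultiplication is produced. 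I expect the bookkeeping to be lengthy --- essentially the Eilenberg--Kelly pentagon computation carried one layer deeper through $G$ and $\delta$ --- but each individual step to be one of the moves just listed, so that no hypothesis beyond ``$G$ is a closed comonad'' is required.
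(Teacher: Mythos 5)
The paper states this proposition without proof, so there is nothing of the author's to compare your argument against; judged on its own, your plan is sound and everything you actually carry out is correct. Your two lemmas on $v$ hold exactly as you say (via \eqref{SCNT2} for $\varepsilon$ plus $\varepsilon_{GC}\delta_C=1$, and via naturality of $\psi$ plus $G\varepsilon_C\cdot\delta_C=1$), and your reductions of \eqref{SCC5}, \eqref{SCC2} and \eqref{SCC3} check out verbatim. Two refinements are needed for the remaining axioms. First, the exponent-changing steps require the \emph{extraordinary} naturality of $L$ in its exponent variable (this is how the paper's phrase ``natural transformation $L^A_{B,C}$'' must be read): in \eqref{SCC4} one slides the exponent along $\psi_0:I\to GI$ to trade $L^{GI}$ for $L^{I}$, and in \eqref{SCC1} one slides it along $v_{A,B}:G[GA,B]\to[GA,GB]$ and along $\delta_A$. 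Your parenthetical gloss of ``naturality of $L$'' describes only naturality in the two hom variables, so this move should be named explicitly; your ``$v$ is exactly the bridge'' remark is where it is silently being used. Second, the attribution for \eqref{SCC4} is off: no counit law enters there; what is used is \eqref{SCNT1} for $\delta$ (that is, $\delta_I\circ\psi_0=G\psi_0\circ\psi_0$), together with \eqref{SCF1} (twice), naturality of $\psi$, the extranaturality just mentioned, and \eqref{SCC4} for $\CV$. The $\varepsilon$-ingredients are confined to \eqref{SCC2}, \eqref{SCC3}, \eqref{SCC5}.

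With these provisos your plan for \eqref{SCC1} does close, and with exactly the ingredients you list. Writing $P=[GA,B]$, $Q=[GA,C]$, and using naturality of $L$, extranaturality of $L$ along $v_{A,B}$, and \eqref{SCC1} for $\CV$ at exponent $GA$ (objects $GB$, $GC$, $D$), the new pentagon reduces to the single compatibility
\begin{equation*}
[1,v_{A,C}]\circ v_{P,Q}\circ G\bigl([v_{A,B},1]\circ L^{GA}_{GB,C}\bigr)\;=\;[v_{A,B},1]\circ L^{GA}_{GB,GC}\circ v_{B,C}\ :\ G[GB,C]\lra [G[GA,B],[GA,GC]]\ ,
\end{equation*}
and this in turn follows from \eqref{SCF3} for $G$, naturality of $\psi$ and of $L$, extranaturality of $L$ along $\delta_A$, and the identity $[\delta_A,1]\circ\psi_{GA,GB}\circ Gv_{A,B}\circ\delta_{[GA,B]}=[1,\delta_B]\circ v_{A,B}$, which is precisely \eqref{SCNT2} for $\delta$ combined with coassociativity $G\delta_A\circ\delta_A=\delta_{GA}\circ\delta_A$. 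So your ingredient list for the hard axiom is complete once extranaturality of $L$ in the exponent is added to it; the remaining work is indeed only bookkeeping.
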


\section{Categories enriched in a skew-closed category}\label{Scc-en}

Let $\CV$ be a skew-closed category. A $\CV${\em -category} $\CA$ consists of the following data:
\begin{itemize}
\item[(i)] 
a set $\mathrm{ob}\CA$ whose elements are called {\em objects} of $\CA$,
\item[(ii)] 
an object $\CA (A,B)$ of $\CV$, called a {\em hom} of $\CA$, for each pair of objects $A$ and $B$ of $\CA$,
\item[(iii)]
a morphism $j=j_A:I\lra \CA (A,A)$ for each object $A$ of $\CA$,
\item[(iv)]
a morphism $L=L_{B,C}^A : \CA (B,C) \lra [\CA (A,B),\CA (A,C)]$ for each triple of objects $A$, $B$ and $C$ of $\CA$,
\end{itemize}
satisfying the following three axioms. Notice that, as a space saving measure particularly in diagrams, we shall often write $(A,B)$ or even $(AB)$ for $\CA (A,B)$. 

\begin{equation}\label{EC1}
\begin{aligned}
\xymatrix{
& [(A,C),(A,D)] \ar[rd]^-{ L}  & \\
(C,D) \ar[ru]^-{L} \ar[d]_-{L} & & [[(A,B),(A,C)],[(A,B),(A,D)]] \ar[d]^-{[L,1]} \\
[(B,C),(B,D)] \ar[rr]_-{[1,L]} & & [(B,C),[(A,B),(A,D)]] }
\end{aligned}
\end{equation}

\begin{equation}\label{EC2}
\begin{aligned}
\xymatrix{
[(A,A),(A,C)] \ar[rr]^-{[j,1]}  && [I,(A,C)] \ar[d]^-{i} \\
(AC) \ar[rr]_-{1} \ar[u]^-{L} && (AC)}
\end{aligned}
\end{equation}

\begin{equation}\label{EC3}
\begin{aligned}
\xymatrix{
(B,B) \ar[rr]^{L}   && [(A,B),(A,B)] \\
& I\ar[lu]^{j} \ar[ru]_{j}  & }
\end{aligned}
\end{equation}

\begin{example} Notice that $\CV$ itself is a $\CV$-category. Since $\CV (A,B)$ is the hom set of the category $\CV$, we will usually write $[A,B]$ for the hom in $\CV$ as a $\CV$-category. The data $j$ and $L$ are unambiguous morphisms.
\end{example}

\begin{example} There is the {\em unit} $\CV$-category $\CI$. It has only one object $0$ and hom $\CI (0,0)=I$. Also, $j_0:I\lra \CI (0,0)$ is the identity morphism $1_I$ of $I$ while $L:\CI (0,0)\lra [\CI (0,0),\CI (0,0)]$ is $j_I : I \lra [I,I]$. 
\end{example}

A $\CV${\em -functor} $T:\CA \lra \CX$ consists of a function $T: \mathrm{ob}\CA \lra \mathrm{ob}\CX$ and morphisms 
$$T_{A,B} : \CA (A,B) \lra \CX (TA,TB) $$ 
in $\CV$, called the {\em effect on homs}, satisfying the following two axioms.

\begin{equation}\label{EF1}
\begin{aligned}
\xymatrix{
& \CX (TA,TC) \ar[rd]^-{ L}  & \\
\CA (A,C) \ar[ru]^-{T} \ar[d]_-{L} & & [\CX (TB,TA),\CX (TB,TC)] \ar[d]^-{[T,1]} \\
[\CA (B,A),\CA (B,C)] \ar[rr]_-{[1,T]} & & [\CA (B,A),\CX (TB,TC)] }
\end{aligned}
\end{equation}

\begin{equation}\label{EF2}
\begin{aligned}
\xymatrix{
\CA (A,A) \ar[rr]^{T}   && \CX (TA,TA) \\
& I\ar[lu]^{j} \ar[ru]_{j}  & }
\end{aligned}
\end{equation}

A $\CV$-functor $T:\CA \lra \CV$ might also be called a {\em left $\CA$-module}.

\begin{example}
For any $\CV$-category $\CA$ and any object $K$ of $\CA$, there is the {\em representable} $\CV$-functor 
$$\CA (K,-) : \CA \lra \CV$$
taking the object $A$ to $\CA (K,A)$ and with $L^K$ as effect on homs. 
\end{example}

We write $\CV \mathrm{Cat}$ for the category of $\CV$-categories and $\CV$-functors.

Each closed functor $F:\CV \lra \CW$ determines a functor 
\begin{equation} F_{\ast} : \CV \mathrm{Cat} \lra \CW \mathrm{Cat}
\end{equation}  
defined as follows. For each $\CV$-category $\CA$, 
the $\CW$-category $F_{\ast} \CA$ has the same objects as $\CA$,
with $j_A$ equal to the composite
$$I \stackrel{\psi_0} \lra FI \stackrel{Fj}\lra F\CA (A,A) \ ,$$
and with $L_{B,C}^A$ equal to the composite
$$F\CA (B,C) \stackrel{FL}\lra F[\CA (A,B),\CA (A,C)] \stackrel{\psi}\lra [F\CA (A,B),F\CA (A,C)] \ .$$
For each $\CV$-functor $T:\CA \lra \CX$, the $\CW$-functor $F_{\ast}T:F_{\ast}\CA \lra F_{\ast}\CX$ agrees with $T$ on objects and has effect on homs $FT_{A,B} : F\CA (A,B) \lra F\CX (TA,TB)$.  

\begin{example}\label{Forgetful} 
The closed functor $\mathrm{V} : \CV \lra \mathrm{Set}$ determines a functor
$$\mathrm{V}_{\ast} : \CV \mathrm{Cat} \lra \mathrm{Cat} \ .$$ 
A morphism $f:A\lra B$ of the category $\mathrm{V}_{\ast}\CA$ is a morphism $f:I\lra \CA (A,B)$ in $\CV$. 
Notice that if $\CV$ is left normal then there is 
a canonical isomorphism of categories 
$$\mathrm{V}_{\ast}\CV \cong \CV \ .$$
\end{example}   

A $\CV${\em -natural transformation} 
$$\theta : S\Lra T : \CA \lra \CV$$
consists of morphisms 
$$\theta_A : SA \lra TA$$
satisfying the following axiom.
\begin{equation}\label{ENTV}
\begin{aligned}
\xymatrix{
\CA (A,B) \ar[rr]^-{S} \ar[d]_-{T} && [SA,SB] \ar[d]^-{[1,\theta_B]} \\
[TA,TB] \ar[rr]_-{[\theta_A,1]} && [SA,TB]}
\end{aligned}
\end{equation}

Write $\CV^\CA$ for the category whose objects are $\CV$-functors $T : \CA \lra \CV$ and whose morphisms are $\CV$-natural transformations between such.

\begin{example}\label{ContraHom}
For each morphism $f:A \lra B$ in $\CA$ (that is, morphism  $f:I\lra \CA (A,B)$), there is a $\CV$-natural transformation $\CA (f,-)$ consisting of the morphisms $\CA (f,C) : \CA (B,C) \lra \CA (A,C)$ defined as the composites
$$\CA (B,C) \stackrel{L}\lra [\CA (A,B),\CA (A,C)]\stackrel{[f,1]}\lra [I,\CA (A,C)]\stackrel{i}\lra \CA (A,C) \ .$$
We shall see in Section~\ref{Yone} why it is a $\CV$-natural transformation and that every $\CV$-natural transformation between representable $\CV$-functors is of this form.
\end{example}

\section{The external Yoneda lemma}\label{Yone}

\begin{theorem} 
Suppose $\CV$ is a left skew-closed category, $\CA$ is a $\CV$-category, $K$ is an object of $\CA$, and $T:\CA \lra \CV$ is a $\CV$-functor. Then the assignment 
$$\CV^\CA (\CA(K,-),T) \lra \CV (I,TK) \ ,$$
taking $\theta$ to $\theta_K j$, is a bijection.
\end{theorem}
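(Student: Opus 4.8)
The plan is to write down an explicit inverse. To a morphism $x : I \lra TK$ in $\CV$ I associate the family $\theta^x = (\theta^x_A)_{A \in \CA}$ with
$$\theta^x_A \;:\; \CA(K,A) \stackrel{T_{K,A}}\lra [TK,TA] \stackrel{[x,1]}\lra [I,TA] \stackrel{i}\lra TA \ ,$$
the skew-closed analogue of ``evaluating the $\CV$-functor $T$ at the generalized element $x$''. It then suffices to prove: (a) each $\theta^x$ is a $\CV$-natural transformation $\CA(K,-) \Rightarrow T$, so that $x \mapsto \theta^x$ is a well-defined map $\CV(I,TK) \lra \CV^\CA(\CA(K,-),T)$; (b) $\theta^x_K \circ j_K = x$; and (c) $\theta = \theta^{\theta_K j_K}$ for every $\CV$-natural $\theta : \CA(K,-) \Rightarrow T$. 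Indeed (b) and (c) say exactly that $x \mapsto \theta^x$ is a two-sided inverse of the function $\theta \mapsto \theta_K j$.

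For (a) I would recognise $\theta^x$ as a composite of $\CV$-natural transformations. First, $[TK,-] : \CV \lra \CV$ is a $\CV$-functor with effect-on-homs $L^{TK}$ (axioms \eqref{SCC1} and \eqref{SCC3} are precisely \eqref{EF1} and \eqref{EF2} for it), and the effect-on-homs of $T$ at the object $K$ forms a $\CV$-natural transformation $T_{K,-} : \CA(K,-) \Rightarrow [TK,-]\circ T$ — spelling out \eqref{ENTV} for it gives back exactly axiom \eqref{EF1} for $T$ with middle object $K$. Secondly, $i\circ[x,1] : [TK,-] \Rightarrow \mathrm{Id}_{\CV}$ is a $\CV$-natural transformation of endo-$\CV$-functors of $\CV$: it factors as $[TK,-] \stackrel{[x,1]}\Rightarrow [I,-] \stackrel{i}\Rightarrow \mathrm{Id}_{\CV}$, where $\CV$-naturality of $i$ is axiom \eqref{SCC4}, and $\CV$-naturality of $[x,1]$ unwinds to the identity $[[x,1],1]\circ L^{I}_{A,B} = [1,[x,1]]\circ L^{TK}_{A,B}$, which is the (di)naturality of $L$ in its superscript variable applied to $x : I \lra TK$. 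Whiskering $i\circ[x,1]$ by $T$ and composing with $T_{K,-}$ then produces a $\CV$-natural transformation $\CA(K,-)\Rightarrow T$ with $A$-component $i \circ [x,1] \circ T_{K,A} = \theta^x_A$, as wanted. This step is the main obstacle: once one has $\CV$-naturality of $i$ (axiom \eqref{SCC4}) and of $[x,1]$ (coherence of $L$ in its contravariant superscript slot), the rest is bookkeeping with functoriality of $[-,-]$.

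For (b): axiom \eqref{EF2} for $T$ at $K$ gives $T_{K,K}\circ j_K = j_{TK}$, so $\theta^x_K \circ j_K = i \circ [x,1] \circ j_{TK}$; now naturality of $j$ turns this into $i \circ [1,x] \circ j_I$, naturality of $i$ turns it into $x \circ i_I \circ j_I$, and \eqref{SCC5} finishes it with $i_I \circ j_I = 1_I$. For (c): axiom \eqref{EC2} at $A = K$ reads $i\circ[j_K,1]\circ L^K_{K,B} = 1_{\CA(K,B)}$, so naturality of $i$ gives $\theta_B = i\circ[1,\theta_B]\circ[j_K,1]\circ L^K_{K,B}$; commuting $[1,\theta_B]$ past $[j_K,1]$ by functoriality of $[-,-]$, then applying the $\CV$-naturality square of $\theta$ at $(K,B)$ (which replaces $[1,\theta_B]\circ L^K_{K,B}$ by $[\theta_K,1]\circ T_{K,B}$), and contracting $[j_K,1]\circ[\theta_K,1] = [\theta_K\circ j_K,1]$, yields $\theta_B = i\circ[\theta_K\circ j_K,1]\circ T_{K,B} = \theta^{\theta_K j_K}_B$. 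This completes the proof.
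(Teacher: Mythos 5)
Your proposal is correct and takes essentially the same route as the paper: the same explicit inverse $x\mapsto \bigl(i\circ[x,1]\circ T_{K,A}\bigr)_A$, the same two identities establishing it as a two-sided inverse (using \eqref{EF2}, extranaturality of $j$ and naturality of $i$, \eqref{SCC5} for one direction, and \eqref{EC2} with \eqref{ENTV} and functoriality of $[-,-]$ for the other). Your packaging of the $\CV$-naturality check as a composite of the $\CV$-natural families $T_{K,-}$, $[x,1]$ and $i$ is just a structured reading of the paper's diagram \eqref{YonePf1}, whose regions are exactly \eqref{EF1}, extranaturality of $L$ in its superscript, axiom \eqref{SCC4}, and functoriality of $[-,-]$.
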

\begin{proof}
Given $\xi : I \lra TK$, define $\hat{\xi}_A$ to be the composite 
$$(KA) \stackrel{T} \lra [TK,TA] \stackrel{[\xi ,1]} \lra [I,TA] \stackrel{i} \lra TA \ .$$
To see that this family of morphisms is $\CV$-natural, consider the following diagram.
\begin{equation}\label{YonePf1}
\begin{aligned}
\xymatrix{
(AB) \ar[rrrr]^-{L} \ar[d]_-{T} &  & & & [(KA),(KB)] \ar[d]^-{[1,T]} \\
[TA,TB] \ar[dd]_-{[i,1]} \ar[rd]_-{L}  \ar[rr]^-{L} & & X \ar[rr]^-{[T,1]} \ar[rd]_-{[1,[\xi ,1]]} & & [(KA),[TK,TB]] \ar[dd]^-{[1,[\xi,1]]}  \\
& Y \ar[ld]^-{[1,i]} \ar[rr]_-{[[\xi ,1],1]} & & Z  \ar[ldld]^-{[1,i]} \\
[[I,TA],TB] \ar[rd]_{[[\xi ,1],1]} & & & & [(KA),[I,TB]] \ar[d]^-{[1,i]} \\
& [[TK,TA],TB]  \ar[rrr]_-{[T,1]} & & & [(KA),TB] }
\end{aligned}
\end{equation}
where we have put 
$$X=[[TK,TA],[TK,TB]] \ , \ Y=[[I,TA],[I,TB]] \ , \ Z=[[TK,TA],[I,TB]] \ .$$ 
In \eqref{YonePf1}, we have commutativity of the pentagon by \eqref{EF1}, of the quadrilateral involving $X$ by naturality of $L$, of the triangle involving $Y$ by \eqref{SCC5}, and of the other two regions by functoriality of $[-,-]$. Commutativity of the boundary expresses the required $\CV$-naturality; see \eqref{ENTV}.   

To see that the assignment taking $\xi$ to $\hat{\xi}$ is right inverse to the assignment in the statement of the theorem, notice that \eqref{YonePf2} commutes using \eqref{EF2}, naturality of $j$ and $i$, and \eqref{SCC5}.
 
\begin{equation}\label{YonePf2}
\begin{aligned}
\xymatrix{
I \ar[dd]_{1} \ar[rr]^{j} \ar[rd]_{j} \ar[rrd]^{j} & & (KK) \ar[d]^{T} \\
& [I,I] \ar[ld]_{i} \ar[rd]_{[1,\xi ]}& [TK,TK] \ar[d]^{[\xi , 1]} \\
I \ar[r]_{\xi} & TK & [I,TK] \ar[l]^{i} }
\end{aligned}
\end{equation}

Finally, we need to see that $\theta_A = \hat{\xi}_A$ if $\xi = \theta_K j$. For this we see that $$\hat{\xi}_A = i_{TA} [j_K ,1] [\theta_K , 1] T = i_{TA} [j_K ,1] [1, \theta_K] L$$
by \eqref{ENTV}. But, by functoriality of [-,-] and naturality of $i$, this equals $\theta_A i_{(KA)} [j_K ,1] L$, which reduces to $\theta_A$ using \eqref{SCC2}. 
\end{proof}

\section{Enriched presheaves and modules}\label{EnPresh}

Let $\CV$ be a left skew-closed category and let $\CA$ be a $\CV$-category.

A {\em presheaf} $P$ on $\CA$ (or {\em right $\CA$-module} $P$) consists of a function $P:\mathrm{ob}\CA \lra \mathrm{ob}\CV$ and morphisms 
$$P_{A,B} : PA \lra [\CA (B,A),PB]$$ 
in $\CV$ satisfying the following two axioms.  

\begin{equation}\label{EP1}
\begin{aligned}
\xymatrix{
& [(BA),PB] \ar[rd]^-{ L}  & \\
PA \ar[ru]^-{P} \ar[d]_-{P} & & [[(BC),(BA)],[(BC),PB]] \ar[d]^-{[L,1]} \\
[(CA),PC] \ar[rr]_-{[1,P]} & & [(CA),[(BC),PB]] }
\end{aligned}
\end{equation}

\begin{equation}\label{EP2}
\begin{aligned}
\xymatrix{
[(AA),PA] \ar[rr]^-{[j,1]}  && [I,PA] \ar[d]^-{i} \\
PA \ar[rr]_-{1} \ar[u]^-{P} && PA}
\end{aligned}
\end{equation}

\begin{example}\label{contahom}
For any object $K$ of $\CA$, we have the presheaf $\mathrm{y}K = \CA (-,K)$ on $\CA$
defined by $(\mathrm{y}K) A = \CA (A,K)$ and 
$(\mathrm{y}K)_{A,B} = L_{A,K}^B$.  
\end{example}

\begin{example}\label{contrahomwithfunct}
More generally, for any $\CV$-functor $T:\CA \lra \CX$ and any object $K$ of $\CX$, we have the presheaf $\CX (T-,K)$ on $\CA$
defined by $\CX (T-,K) A = \CX (TA,K)$ and 
$\CX (T-,K)_{A,B} = L_{TA,K}^{TB}$.  
\end{example}
 
Suppose $P$ and $Q$ are presheaves on $\CA$. 
A {\em presheaf morphism} $\theta : P \lra Q$ is a family of morphisms 
$$\theta_A : PA \lra QA$$
such that the following diagram commutes.

\begin{equation}\label{PM}
\begin{aligned}
\xymatrix{
PA \ar[rr]^-{P} \ar[d]_-{\theta_A} && [\CA (B,A),PB] \ar[d]^-{[1,\theta_A]} \\
QA \ar[rr]_-{Q} && [\CA (B,A),QB]}
\end{aligned}
\end{equation}

\begin{example}\label{CovarHom}
Suppose $\CV$ is left normal. For each morphism $f:A \lra B$ in $\CA$, there is a presheaf morphism 
$$\mathrm{y}f  = \CA (-,f) : \mathrm{y}A \lra \mathrm{y}B$$ 
consisting of the morphisms $\CA (C,f) : \CA (C,A) \lra \CA (C,B)$ corresponding under the bijection of left normality to the composites
$$I\stackrel{f}\lra \CA (A,B) \stackrel{L}\lra [\CA (C,A),\CA (C,B)] \ .$$
\end{example}

We also define the {\em presheaf hom} $\hat{\CA} (P,Q)$ to be the object of $\CV$ obtained as the following limit, when it exists.

\begin{equation}\label{PMO}
\begin{aligned}
\xymatrix{
\hat{\CA} (P,Q) \ar@{.>} [rr]^-{p_B} \ar@{.>}[d]_-{p_A} &  & [PB,QB] \ar[d]^-{L} \\
[PA,QA] \ar[r]_-{[1,Q]} & [PA,[(BA),QB]]  & [[(BA),PB],[(BA),QB]] \ar[l]^-{[P,1]}}
\end{aligned}
\end{equation}

There is a morphism 
$$j_P : I \lra \hat{\CA} (P,P)$$ 
induced on the limit by the morphisms 
$j_{PA }: I \lra [PA,PA]$. 
This uses \eqref{SCC3} and naturality of $j$.  

There is a morphism 
$$\mathrm{y}_{A,B} : \CA (A,B) \lra \hat{\CA} (\mathrm{y}A,\mathrm{y}B)$$  
induced on the limit by the morphisms $L_{A,B}^C$. 
We shall see in Section~\ref{StrongYone} that each $\mathrm{y}_{A,B}$ is invertible. 

\begin{remark}\label{enrichedpresheafcat} 
If all the limits $\hat{\CA} (P,Q)$ exist in $\CV$ and are preserved by each
of the functors $[X,-] : \CV \lra \CV$ then we obtain a $\CV$-category $\hat{\CA}$ whose objects are the presheaves on $\CA$ and whose homs are the objects $\hat{\CA} (P,Q)$.      
The morphism $L_{P,Q}^N$ is defined by the following commutative diagram.
\begin{equation*}
\begin{aligned}
\xymatrix{
\hat{\CA} (P,Q) \ar@{.>} [rr]^-{L_{P,Q}^N} \ar[d]_-{p_A} &  & [\hat{\CA} (N,P),\hat{\CA} (N,Q)] \ar[d]^-{[1,p_A]} \\
[PA,QA] \ar[r]_-{L^{NA}} & [[NA,PA],[NA,QA]] \ar[r]_-{[p_A,1]}  & [\hat{\CA} (N,P),[NA,QA]]}
\end{aligned}
\end{equation*}
The morphism $j_P : I \lra \hat{\CA} (N,P)$ is defined by $p_A j_P = j_{PA}$. Also we then have the {\em Yoneda} $\CV$-functor $\mathrm{y}:\CA \lra \hat{\CA}$.
\end{remark}

For $\CV$-categories $\CA$ and $\CX$, a {\em module} $\Phi : \CA \lra \CX$ consists of a family of objects $\Phi (X,A)$ of $\CV$ indexed by $X$ in $\CX$ and $A$ in $\CA$, the structure 
$$L_{\ell} : \CA (A,B) \lra [\Phi (X,A),\Phi (X,B)]$$
of $\CV$-functor $\Phi (X,-): \CA \lra \CV$ 
with object function $A \mapsto \Phi(X,A)$, and the structure 
$$L_{r} : \Phi (X,A) \lra [\CX (Y,X),\Phi (Y,A)]$$
of presheaf on $\CX$ with object function $X \mapsto \Phi(X,A)$, 
subject to the extra axiom \eqref{SCCMod}.   

\begin{equation}\label{SCCMod}
\begin{aligned}
\xymatrix{
& [\Phi (YA) \Phi (YB)] \ar[rd]^-{ L}  & \\
(AB) \ar[ru]^-{L_{\ell}} \ar[d]_-{L_{\ell}} & & [[(YX) \Phi (YA)] [(YX) \Phi (YB)]]\ar[d]^-{[L_r,1]} \\
[\Phi (XA) \Phi (XB)] \ar[rr]_-{[1,L_r]} & & [\Phi(XA) [(YX) \Phi (YB)]] }
\end{aligned}
\end{equation}

\begin{remark}\label{closedmodulewarning}
A module $\Phi : \CI \lra \CA$ is precisely a presheaf (or right module) on $\CA$. 
However, we have a word of warning. A module $\Phi : \CA \lra \CI$ amounts to a $\CV$-functor $T:\CA \lra \CV$ such that each $i:[I,FA]\lra FA$ has a right inverse subject to a condition. If $i:[I,X]\lra X$ is invertible in $\CV$ for all objects $X$ then the notions agree.   
\end{remark}

\section{A strong Yoneda lemma}\label{StrongYone}

\begin{theorem} 
Suppose $\CV$ is a left skew-closed category, $\CA$ is a $\CV$-category, $K$ is an object of $\CA$, and $P$ is a presheaf on $\CA$. Then diagram \eqref{EP1} induces an isomorphism
$$PK \cong \hat{\CA} (\mathrm{y}K,P) \ .$$
\end{theorem}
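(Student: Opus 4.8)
\section*{Proof proposal}

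The plan is to mimic the external Yoneda lemma (the first theorem) at the level of the object $PK$ rather than just the hom-set $\CV(I,PK)$, and to exhibit an explicit inverse to the comparison map. First I would identify the comparison map $PK \lra \hat{\CA}(\mathrm{y}K,P)$: it is the morphism induced on the limit cone \eqref{PMO} (with $P$ replaced by $\mathrm{y}K = \CA(-,K)$ and $Q$ by $P$) by the family of composites $PK \stackrel{P_{K,B}}\lra [\CA(B,K),PB] = [(\mathrm{y}K)B,PB]$; I would need to check this family is compatible with the defining cone for $\hat{\CA}(\mathrm{y}K,P)$, which is exactly an instance of axiom \eqref{EP1} for $P$ (together with functoriality of $[-,-]$). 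So the real content is producing an inverse.

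The inverse should be ``evaluate at $j_K$''. Concretely, given any cone $(\theta_B : \hat{\CA}(\mathrm{y}K,P) \lra [(\mathrm{y}K)B, PB])_B$ — i.e.\ given a generalized element, equivalently a map from any object $X$ into $\hat{\CA}(\mathrm{y}K,P)$ — the component $\theta_K : \hat{\CA}(\mathrm{y}K,P) \lra [\CA(K,K),PK]$ can be precomposed with $j_K : I \lra \CA(K,K)$ to land in $[I,PK]$ and then postcomposed with $i : [I,PK]\lra PK$. I would define $e : \hat{\CA}(\mathrm{y}K,P) \lra PK$ to be $i_{PK}\circ [j_K,1]\circ \theta_K$, and claim $e$ is a two-sided inverse to the comparison map. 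One direction — that $e$ composed with the comparison map is the identity on $PK$ — should follow from axiom \eqref{EP2} for $P$ at the object $K$ (the triangle forcing $i\circ[j,1]\circ P = 1$), together with naturality of $i$ and $j$, exactly paralleling diagram \eqref{YonePf2} in the earlier proof.

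The other direction is the main obstacle: showing that the comparison map composed with $e$ is the identity on $\hat{\CA}(\mathrm{y}K,P)$. By the universal property it suffices to check that postcomposing with each projection $p_A : \hat{\CA}(\mathrm{y}K,P)\lra [\CA(A,K),PA]$ gives back $p_A$. After substituting the definition of $e$ and of the comparison map, this becomes the assertion that a certain diagram built from the presheaf structure $P_{K,A}$, the map $[j_K,1]$, $i_{PK}$, and $p_A$ commutes — and this is precisely where the defining equation of the cone (\eqref{PMO} for $\hat{\CA}(\mathrm{y}K,P)$), axiom \eqref{EP1}, axiom \eqref{EC2}, and naturality of $L$, $i$, $j$ all have to be brought together, in a diagram chase of the same flavour as \eqref{YonePf1}. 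In particular I expect to use that $(\mathrm{y}K)_{A,B} = L^B_{A,K}$ and axiom \eqref{EC2} (or \eqref{SCC2}) to collapse an ``$L$ then $[j,1]$ then $i$'' string to an identity, just as in the last line of the proof of the external Yoneda lemma. Finally I would remark that the external Yoneda lemma follows by applying $\CV(I,-)$ to this isomorphism, using left normality only insofar as it is not actually needed here — the strong version is genuinely stronger.
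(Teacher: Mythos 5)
Your proposal is correct and is essentially the paper's own argument in a different packaging: your identity $e\circ c=1_{PK}$ is the paper's uniqueness step via \eqref{EP2}, and your identity $c\circ e=1$, checked against each projection $p_A$, is exactly the paper's verification that a cone factors through $(P_{K,B})_B$, specialised to the limit cone of $\hat{\CA}(\mathrm{y}K,P)$ -- the one ingredient your sketch leaves implicit is the unit axiom \eqref{EC3} (the paper cites it as \eqref{SCC3}), which rewrites $j_{(A,K)}$ as $L^{A}_{K,K}\circ j_K$ so that the defining cone equation of \eqref{PMO} can be brought in before the \eqref{SCC2} collapse, whereas \eqref{EP1} is needed only to construct the comparison map $c$. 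Note also that the paper's formulation yields the marginally stronger statement that $(P_{K,B})_B$ is itself a limit cone (used later for weighted colimits), while your argument presupposes the limit exists, and your closing remark is not quite right: the external Yoneda lemma concerns covariant $\CV$-functors $T:\CA\lra\CV$ rather than presheaves, so it does not follow simply by applying $\CV(I,-)$ to this isomorphism.
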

\begin{proof}
Diagram \eqref{EP1} shows that we have a cone $P_{K,B}:PK\lra [(BK),PB]$ over the appropriate diagram \eqref{PMO} with limit 
$\hat{\CA} (\mathrm{y}K,P)$. Take any cone $\xi_A : X \lra [(AK),PA]$ over that diagram. We seek a morphism $f:X\lra PK$ such that $\xi_A = P_{K,A} f$ for all $A$. By \eqref{EP2}, it follows that 
$$f=i_{PK}[j_K,1]P_{K,K} f = i_{PK}[j_K,1] \xi_K \ ,$$ 
showing the uniqueness of such an $f$. Taking $f=i_{PK}[j_K,1] \xi_K$, it remains to show $\xi_A = P_{K,A} f$ for all $A$. By \eqref{SCC2}, we have 
$$\xi_A = i_{[(AK),PA]} [j_{(AK)},1] L^A \xi_A \ .$$ 
This allows us to use \eqref{SCC3} to obtain
 $$\xi_A = i_{[(AK),PA]} [j_{(KK)},1] [L^A,1]L^A \xi_A \ ,$$ 
 and so use the cone property of $\xi$ to deduce
$$\xi_A = i_{[(AK),PA]} [j_{(KK)},1] [1,P_{K,A}] \xi_K \ .$$ 
Functoriality of $[-,-]$ and naturality of $i$ show that
$$\xi_A = i_{[(AK),PA]} [1,P_{K,A}] [j_{(KK)},1] \xi_K = P_{K,A} i_{PK} [j_{(KK)},1] \xi_K = P_{K,A} f \ ,$$
which completes the proof.      
\end{proof}

\section{Weighted colimits}\label{Wghtdcolims}

Recall from Example~\ref{contrahomwithfunct} that a $\CV$-functor $T:\CA \lra \CX$ and an object $X$ of $\CX$ determine a presheaf $\CX (T-,X)$ on $\CA$. If $J$ is another presheaf on $\CA$, we call a presheaf morphism $\kappa : J\lra \CX (T-,K)$ a {\em $J$-weighted cocone} for $T$ with {\em vertex} $K$. The composites
\begin{equation}\label{cocone}
\begin{aligned}
\xymatrix{
\CX (K,X) \ar[r]^-{L^{TA}} & [\CX (TA,K),\CX (TA,X)] \ar[r]^-{[\kappa_A,1]} & [JA,\CX (TA,X)] }
\end{aligned}
\end{equation}
for $A$ in $\CA$ induce a morphism 
\begin{equation} 
\overline{\kappa}_X : \CX (K,X) \lra \hat{\CA} (J,\CX (T-,X))
\end{equation} 
when the codomain exists. 

We say that {\em $\kappa$ exhibits $K$ as a $J$-weighted colimit of} $T$ when each 
$\overline{\kappa}_X$ is invertible; or, more precisely, when the 
morphisms \eqref{cocone} form a limit of the diagram defining 
$\hat{\CA} (J,\CX (T-,X))$. The notation is $K=\mathrm{colim}(J,T)$ or $K=J\star T$.  

We can reformulate the strong Yoneda lemma in this notation.

\begin{proposition}
If $\CV$ is skew-closed, $T:\CA \lra \CX$ is a $\CV$-functor, 
and $K$ is an object of $\CA$ then 
$$\mathrm{colim}(\mathrm{y}K,T) \cong TK \  .$$
\end{proposition}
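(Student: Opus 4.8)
The plan is to unwind the definition of $\mathrm{colim}(\mathrm{y}K,T)$ and observe that it literally reduces to a presheaf-hom that the strong Yoneda lemma already computes. Recall from Example~\ref{contrahomwithfunct} that $T:\CA\lra\CX$ and an object $X$ of $\CX$ determine the presheaf $\CX(T-,X)$ on $\CA$, and that $\mathrm{colim}(\mathrm{y}K,T)=K'$ means precisely that there is a $\mathrm{y}K$-weighted cocone $\kappa:\mathrm{y}K\lra\CX(T-,K')$ for which each
$$\overline{\kappa}_X:\CX(K',X)\lra\hat{\CA}(\mathrm{y}K,\CX(T-,X))$$
is invertible. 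So the first step is to exhibit a natural candidate for $K'$ together with its cocone: I would take $K'=TK$ and let $\kappa:\mathrm{y}K\lra\CX(T-,TK)$ be the presheaf morphism whose component $\kappa_A:\CA(A,K)\lra\CX(TA,TK)$ is simply the effect-on-homs $T_{A,K}$. One must check this is indeed a presheaf morphism, i.e.\ that diagram~\eqref{PM} commutes for $P=\mathrm{y}K$ (with structure $L^B_{A,K}$, by Example~\ref{contahom}) and $Q=\CX(T-,TK)$ (with structure $L^{TB}_{TA,TK}$, by Example~\ref{contrahomwithfunct}); this is exactly axiom~\eqref{EF1} for the $\CV$-functor $T$, specialised to the relevant objects.

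The second step is to identify $\hat{\CA}(\mathrm{y}K,\CX(T-,X))$ via the strong Yoneda lemma (the preceding Theorem): applying that result with the presheaf $P=\CX(T-,X)$ in place of the general $P$, one gets a canonical isomorphism
$$\CX(T-,X)(K)=\CX(TK,X)\ \cong\ \hat{\CA}(\mathrm{y}K,\CX(T-,X))$$
induced by the cone $\bigl(\CX(T-,X)\bigr)_{K,B}=L^{TB}_{TA,\dots}$; here one simply reads off that the component of the Yoneda isomorphism at $B$ is $L^{TB}_{TK,X}:\CX(TK,X)\lra[\CA(B,K),\CX(TB,X)]$. So the last thing to verify is that the map $\overline{\kappa}_X:\CX(TK,X)\lra\hat{\CA}(\mathrm{y}K,\CX(T-,X))$ built from the cocone~\eqref{cocone} coincides, for each $X$, with this Yoneda isomorphism. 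Concretely, chasing~\eqref{cocone} with $\kappa_A=T_{A,K}$ gives the composite $\CX(K,X)\xrightarrow{L^{TA}}[\CX(TA,TK),\CX(TA,X)]\xrightarrow{[T_{A,K},1]}[\CA(A,K),\CX(TA,X)]$, and one checks that under the Yoneda isomorphism this is precisely $L^{TA}_{TK,X}$ composed with the Yoneda cone. This is again an instance of naturality of $L$ together with axiom~\eqref{EF1} for $T$; no invertibility of associativity or unit constraints is needed, consistently with the rest of the paper.

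I expect the main obstacle to be purely bookkeeping: making the matching of the two limit diagrams precise. The presheaf-hom $\hat{\CA}(\mathrm{y}K,\CX(T-,X))$ of~\eqref{PMO} is a limit indexed by pairs $(A,B)$, while the colimit condition asks that the family~\eqref{cocone} be a limit cone for \emph{that same} diagram; so the real content is to show that the cocone~\eqref{cocone}, after the substitution $\kappa_A=T_{A,K}$, is term-for-term the Yoneda cone $\bigl(\CX(T-,X)\bigr)_{K,A}$ transported along the first step's identification of $\mathrm{y}K$'s structure maps with those occurring in~\eqref{cocone}. Once that identification is in place, invertibility of $\overline{\kappa}_X$ for all $X$ is immediate from the strong Yoneda lemma, and ``$\kappa$ exhibits $TK$ as the $\mathrm{y}K$-weighted colimit of $T$'' follows by definition, giving $\mathrm{colim}(\mathrm{y}K,T)\cong TK$. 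I would therefore organise the proof as: (1) define $\kappa$ and check~\eqref{PM} via~\eqref{EF1}; (2) apply the strong Yoneda lemma to $P=\CX(T-,X)$; (3) match~\eqref{cocone} with the Yoneda cone, again via~\eqref{EF1} and naturality of $L$, and conclude.
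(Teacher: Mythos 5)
Your proposal is correct and follows essentially the same route as the paper: take the cocone $\kappa=\tau$ with components $T_{A,K}$, apply the strong Yoneda lemma to the presheaf $P=\CX(T-,X)$, and observe that $\overline{\kappa}_X$ is exactly the resulting isomorphism $\CX(TK,X)\cong\hat{\CA}(\mathrm{y}K,\CX(T-,X))$. Your extra verifications (that $\kappa$ satisfies \eqref{PM} via \eqref{EF1}, and the cone matching, which is in fact definitional from Example~\ref{contrahomwithfunct} rather than needing naturality of $L$ again) just make explicit what the paper leaves implicit.
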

\begin{proof} We have a presheaf morphism $\tau : \mathrm{y}K \lra \CX (T-,TK)$
whose component at $A$ is $T_{A,K}: \CA (A,K) \lra \CX (TA,TK)$. 
Theorem~\ref{StrongYone} gives an isomorphism
$$\CX (TK,X) \cong \hat{\CA}(\mathrm{y}K,\CX (T-,X))$$
which is exactly $\overline{\tau}_X$. The definition of weighted colimit gives the result.
\end{proof}

\begin{remark}\label{weightedcoliminV}
If $\CV$ is skew-closed, if each $[X,-]$ has a left adjoint $-\otimes X$, and if each $[-,X]$ takes colimits to limits, then the $J$-weighted colimit of a $\CV$-functor $T:\CA \lra \CV$ can be calculated as the ordinary colimit as in the following diagram in $\CV$.  
\begin{equation*}
\begin{aligned}
\xymatrix{
JB\otimes ((AB)\otimes TA) \ar[d]_-{1\otimes (T\otimes 1)} & (JB\otimes (AB))\otimes TA \ar[l]_-{a} \ar[r]^-{J\otimes 1} & JA \otimes TA\ar@{.>}[d]^-{q_A}  \\
JB\otimes ([TA,TB]\otimes TA) \ar[r]_-{1\otimes e} & JB\otimes TB  \ar@{.>}[r]_-{q_B} &  \mathrm{colim}(J,T)}
\end{aligned}
\end{equation*}
\end{remark}

Faced with $\CV$-modules $\Phi : \CA \lra \CX$ and $\Psi : \CX \lra \CK$, we have
presheaves $\Phi (-,A)$ on $\CX$ and $\CV$-functors $\Psi (K,-) : \CX \lra \CV$ 
and so can define 
\begin{equation}\label{modulecomposition}
(\Psi \circ \Phi)(K,A) = \mathrm{colim}(\Phi (-,A), \Psi (K,-)) \ .
\end{equation}  
Under the assumptions of Remark~\ref{weightedcoliminV} on $\CV$ plus some cocompleteness, we can make this the object assignment of a {\em composite} module 
$\Psi \circ \Phi : \CA \lra \CK$.  

\section{Monoidal skew-closed equals \\
closed skew-monoidal}\label{TensHom}

Suppose $\CV$ is a category equipped with functors 
$$ [ -, - ] : \CV^{\mathrm{op}} \times \CV \lra \CV  \ \ \mathrm{and} \ \ -\otimes - : \CV \times \CV \lra \CV \ ,$$ 
and a natural isomorphism
$$\pi : \CV (A\otimes B,C) \cong \CV(A,[B,C]) \ .$$
Let $e= \pi^{-1} (1_{[B,C]}): [B,C]\otimes B \lra C$ and $d = \pi (1_{A\otimes B}) : A\lra [B,A\otimes B] $.

By adjointness and the ordinary Yoneda lemma \cite{CWM}, the diagrams \eqref{AssocConstr1} to \eqref{AssocConstr5} below establish bijections amongst natural transformations
$$a : (A\otimes B)\otimes C \lra A\otimes (B\otimes C) \ ,$$
natural transformations
$$p:[B\otimes C,D] \lra [B,[C,D]] \ ,$$
natural transformations
$$L:[A,C] \lra [[B,A],[B,C]] \ ,$$
and natural transformations
$$M:[A,C]\otimes [B,A] \lra [B,C] \ .$$

\begin{equation}\label{AssocConstr1}
\begin{aligned}
\xymatrix{
\CV (A\otimes (B\otimes C),D) \ar[rr]^-{\pi} \ar[d]_-{\CV (a,1)} && \CV (A, [B\otimes C,D]) \ar[d]^-{\CV (1,p)} \\
\CV ((A\otimes B)\otimes C,D) \ar[rr]_-{\pi \pi} && \CV (A,[B,[C,D]])}
\end{aligned}
\end{equation}

\begin{equation}\label{AssocConstr2}
\begin{aligned}
\xymatrix{
[A,C] \ar[rd]_{[e,1]}\ar[rr]^{L}   && [[B,A],[B,C]]  \\
& [[B,A]\otimes B,C] \ar[ru]_{p}  &
}
\end{aligned}
\end{equation}

\begin{equation}\label{AssocConstr3}
\begin{aligned}
\xymatrix{
[A\otimes B,C] \ar[rd]_{L}\ar[rr]^{p}   && [A,[B,C]]  \\
& [[B,A\otimes B],[B,C]] \ar[ru]_{[d,1]}  &
}
\end{aligned}
\end{equation}

\begin{equation}\label{AssocConstr4}
\begin{aligned}
\xymatrix{
[A,C]\otimes [B,A] \ar[rd]_{L\otimes 1}\ar[rr]^{M}   && [B,C]  \\
& [[B,A],[B,C]]\otimes [B,A] \ar[ru]_{e}  &
}
\end{aligned}
\end{equation}

\begin{equation}\label{AssocConstr5}
\begin{aligned}
\xymatrix{
[A,C] \ar[rd]_{d}\ar[rr]^{L}   && [[B,A],[B,C]]  \\
& [[B,A],[A,C]\otimes [B,A]] \ar[ru]_{[1,M]}  &
}
\end{aligned}
\end{equation}

By adjointness, the diagrams \eqref{LtUnitConstr1} and \eqref{LtUnitConstr2} below establish bijections between natural transformations
$$\ell :I\otimes A \lra A$$
and natural transformations
$$j:I \lra [A,A] \ .$$

\begin{equation}\label{LtUnitConstr1}
\begin{aligned}
\xymatrix{
I \ar[rd]_{d}\ar[rr]^{j}   && [A,A]  \\
& [A,I\otimes A] \ar[ru]_{[1,\ell ]}  &
}
\end{aligned}
\end{equation}

\begin{equation}\label{LtUnitConstr2}
\begin{aligned}
\xymatrix{
I\otimes A \ar[rd]_{j\otimes 1}\ar[rr]^{\ell}   && A  \\
& [A,A]\otimes A \ar[ru]_{e}  &
}
\end{aligned}
\end{equation}

By the ordinary Yoneda lemma, the diagram \eqref{RtUnitConstr} below establishes a bijection between natural transformations
$$r :A \lra A\otimes I $$
and natural transformations
$$i:[I,B] \lra B \ .$$

\begin{equation}\label{RtUnitConstr}
\begin{aligned}
\xymatrix{
\CV (A\otimes I,B) \ar[rd]_{\cong}\ar[rr]^{\CV (r,1)}   && \CV (A,B)  \\
& \CV (A,[I,B]) \ar[ru]_{\CV (1,i)}  &
}
\end{aligned}
\end{equation}

\begin{proposition}\label{closed+monoidal} 
Suppose $\CV$ is a category and $ [ -, - ] : \CV^{\mathrm{op}} \times \CV \lra \CV$ is a functor for which each functor $[B,-]$ has a left adjoint $-\otimes B$. 
The above bijections establish a bijection between left skew-closed structures 
$$([-,-],I,i,j,L))$$ on $\CV$ and left skew-monoidal structures $$(-\otimes -,I,r, \ell , a)$$ on $\CV$.
\end{proposition}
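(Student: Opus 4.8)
The plan is to use the fact that the discussion preceding the proposition has already matched up the \emph{data} of the two kinds of structure; what remains is to match up the \emph{axioms}, i.e. to show that for data related by the bijections \eqref{AssocConstr1}--\eqref{RtUnitConstr} the five skew-closed axioms \eqref{SCC1}--\eqref{SCC5} hold if and only if the five skew-monoidal axioms do. Two preliminaries come first. Since the proposition assumes only that each $[B,-]$ has a left adjoint $-\otimes B$, one must promote $-\otimes-$ to a functor of two variables: a morphism $g\colon B'\lra B$ gives $[g,1]\colon [B,-]\Lra [B',-]$, whose mate $-\otimes g\colon -\otimes B'\lra -\otimes B$, together with the triangle identities, makes $-\otimes-$ a bifunctor with $\pi$ natural in $B$ as well. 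Granting this, the bijections \eqref{AssocConstr1}--\eqref{RtUnitConstr} are visibly compatible with naturality, so $a,\ell,r$ are natural exactly when $p,L,i,j$ are; thus the reduction to axiom-matching is legitimate.

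Next I would pair up the axioms, using that the pairing is essentially forced by which constraints occur. Recall that, under the bijections, $j_A$ is the transpose of $\ell_A$ (by \eqref{LtUnitConstr1}), $i_B$ is the mate $e_{I,B}\circ r_{[I,B]}$ of $r_B$ (by \eqref{RtUnitConstr}), and $L$ comes from $a$ via the intermediate $p$ using \eqref{AssocConstr1}--\eqref{AssocConstr3} (with $M$ available from \eqref{AssocConstr4}, \eqref{AssocConstr5} where convenient). So \eqref{SCC5}, in which only $i$ and $j$ appear, should correspond to the unit--unit axiom $\ell_I\circ r_I=1_I$; \eqref{SCC3} (only $j,L$) to $\ell_{B\otimes C}\circ a_{I,B,C}=\ell_B\otimes 1_C$; \eqref{SCC4} (only $i,L$) to $a_{A,B,I}\circ r_{A\otimes B}=1_A\otimes r_B$; \eqref{SCC2} (involving $i,j,L$) to the middle-triangle axiom $(1_A\otimes\ell_C)\circ a_{A,I,C}\circ(r_A\otimes 1_C)=1_{A\otimes C}$; and \eqref{SCC1} (only $L$) to the pentagon. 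Each of these is a reversible diagram chase: transpose the closed-side equation along the relevant adjunction(s), rewrite each $a$ in terms of $L$, $d$ and $e$ using \eqref{AssocConstr1}--\eqref{AssocConstr3}, cancel the units and counits of the adjunctions by their triangle identities, and strip off a leading representable by the ordinary Yoneda lemma. For \eqref{SCC5} this is essentially immediate; for \eqref{SCC2}, \eqref{SCC3}, \eqref{SCC4} it costs a couple of applications of $\pi$ together with naturality of $i,j,d,e$. Crucially, no step inverts a constraint, which is exactly why the reconstruction succeeds here even though the analogous converse fails in the non-skew setting of \cite{EilKel1966}.

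I expect the pentagon versus \eqref{SCC1} to be the main obstacle. The idea is to start from the monoidal pentagon, an identity between two composites $((A\otimes B)\otimes C)\otimes D\lra A\otimes(B\otimes(C\otimes D))$ natural in four variables; apply $\pi$ three times (together with the $p$-correspondence \eqref{AssocConstr1}) to rewrite each of the five edges as a morphism built from the $p$'s; pass from $p$ to $L$ via \eqref{AssocConstr2}, equivalently \eqref{AssocConstr3}; and finally remove the leading $\CV(A,-)$ by the Yoneda lemma. After relabelling, the surviving identity is precisely \eqref{SCC1}, with its length-two and length-three legs accounted for by the two sides of the pentagon. The delicate point is the bookkeeping: one must track in which variable each instance of $\pi$ is taken (so that the subscripts on the various $L$'s land correctly) and in what order the pentagon's edges get transposed. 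This is the skew counterpart of the corresponding calculation in Eilenberg--Kelly \cite{EilKel1966} and, because it uses invertibility of nothing, it transfers without change.
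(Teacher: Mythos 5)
Your proposal is correct and follows essentially the same route as the paper, whose entire proof is the one-line observation that the five skew-closed axioms transform one by one under the bijections \eqref{AssocConstr1}--\eqref{RtUnitConstr} into the five skew-monoidal axioms; your axiom pairing (with \eqref{SCC1} matching the pentagon, \eqref{SCC2} the middle unit triangle, \eqref{SCC3} and \eqref{SCC4} the $\ell$--$a$ and $r$--$a$ axioms, and \eqref{SCC5} the unit--unit axiom) is the intended one. Indeed you supply more detail than the paper does, e.g.\ the bifunctoriality of $-\otimes-$ via mates and the naturality bookkeeping, and correctly stress that no constraint is ever inverted.
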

\begin{proof} The five axioms for a left skew-closed category transform, one by one, under the bijections to the five axioms for a left skew-monoidal category.
\end{proof}

\begin{remark}
Similar considerations apply to closed functors and closed natural transformations versus monoidal functors and monoidal natural transformations in the monoidal skew-closed (= closed skew-monoidal) context.
\end{remark}

\section{A 2-category of enriched categories}\label{2SCC}

In the case where $\CV$ is left normal we can define 2-cells for the category $\CV \mathrm{Cat}$ making it a 2-category. The important observation is Example~\ref{CovarHom}. 

A $\CV${\em -natural transformation} 
$$\theta : S\Lra T : \CA \lra \CX$$
consists of morphisms 
$$\theta_A : SA \lra TA$$
satisfying the following axiom.
\begin{equation}\label{ENT}
\begin{aligned}
\xymatrix{
\CA (A,B) \ar[rr]^-{S} \ar[d]_-{T} && \CX(SA,SB) \ar[d]^-{\CX(1,\theta_B)} \\
\CX(TA,TB) \ar[rr]_-{\CX(\theta_A,1)} && \CX(SA,TB)}
\end{aligned}
\end{equation}

We emphasise that the right-hand side of the square \eqref{ENT} uses left normality of $\CV$. 
In this case, $\CV \mathrm{Cat}$ becomes a 2-category with the $\CV$-natural transformations as 2-cells and the functor of Example~\ref{Forgetful} becomes a 2-functor 
$$\mathrm{V}_{\ast} : \CV \mathrm{Cat} \lra \mathrm{Cat} \ .$$ 

\section{Categories enriched in a skew-monoidal category}\label{Smc-en}

Suppose $\CC$ is a left skew-monoidal category. 
A {\em $\CC$-category} $\CA$ consists of a set $\mathrm{ob}\CA$ of {\em objects}, a family of {\em hom objects} $\CA (A,B)$, a family of morphisms $j:I \lra \CA (A,A)$, and a family of morphisms 
$$M=M_{A,B,C} : \CA (B,C)\otimes \CA (A,B) \lra \CA (A,C) \ ,$$
where $A, B, C\in \CA$, subject to the following three axioms.

\begin{equation}\label{ECM1}
\begin{aligned}
\xymatrix{
& (CD)\otimes ((BC)\otimes (AB)) \ar[rd]^-{ 1\otimes M}  & \\
((CD)\otimes (BC))\otimes (AB) \ar[ru]^-{a} \ar[d]_-{M\otimes 1} & & (CD)\otimes (AC) \ar[d]^-{M} \\
(BD)\otimes (AB) \ar[rr]_-{M} & & (AD)}
\end{aligned}
\end{equation}

\begin{equation}\label{ECM2}
\begin{aligned}
\xymatrix{
(AB)\otimes I \ar[rr]^-{1\otimes j}  && (AB)\otimes (AA) \ar[d]^-{M} \\
(AB) \ar[rr]_-{1} \ar[u]^-{r} && (AB)}
\end{aligned}
\end{equation}

\begin{equation}\label{ECM3}
\begin{aligned}
\xymatrix{
(BB)\otimes (AB) \ar[rr]^{M}   && (AB) \\
& I\otimes (AB)\ar[lu]^{j\otimes 1} \ar[ru]_{\ell}  & }
\end{aligned}
\end{equation}

A $\CC${\em -functor} $T:\CA \lra \CX$ consists of a function $T: \mathrm{ob}\CA \lra \mathrm{ob}\CX$ and morphisms 
$$T_{A,B} : \CA (A,B) \lra \CX (TA,TB) $$ 
in $\CC$, called the {\em effect on homs}, satisfying the following two axioms.

\begin{equation}\label{MEF1}
\begin{aligned}
\xymatrix{
\CA (B,C)\otimes\CA (A,B) \ar[r]^-{T\otimes T} \ar[d]_-{M} & \CX (TB,TC)\otimes \CX (TA,TB) \ar[d]^-{M} \\
\CA (A,C) \ar[r]_-{T}  & \CX (TA,TC) }
\end{aligned}
\end{equation}

\begin{equation}\label{MEF2}
\begin{aligned}
\xymatrix{
\CA (A,A) \ar[rr]^{T}   && \CX (TA,TA) \\
& I\ar[lu]^{j} \ar[ru]_{j}  & }
\end{aligned}
\end{equation}

Suppose $\CA$ and $\CX$ are $\CC$-categories for a skew-monoidal category $\CC$. A {\em module} $\Phi : \CA \lra \CX$ consists of the following data:
\begin{itemize}
\item[(i)] 
an object $\Phi (X,A)$ of $\CC$ for all objects $A$ of $\CA$ and $X$ of $\CX$,
\item[(ii)] 
a morphism $M_{\ell} : \CA (A,B)\otimes \Phi (X,A) \lra \Phi (X,B)$ for all objects $A$ and $B$ of $\CA$ and $X$ of $\CX$,
\item[(iii)]
a morphism $M_{r} : \Phi (X,A) \otimes \CX (Y,X)  \lra \Phi (Y,A)$ for all objects $A$ of $\CA$ and $X$ and $Y$ of $\CX$,
\end{itemize}
subject to the following five axioms (where we also abbreviate $\Phi (X,A)$ to $(XA)$ to save space).

\begin{equation}\label{EMM1}
\begin{aligned}
\xymatrix{
& (AB)\otimes ((XA)\otimes (YX)) \ar[rd]^-{ 1\otimes M_r}  & \\
((AB)\otimes (XA))\otimes (YX) \ar[ru]^-{a} \ar[d]_-{M_{\ell}\otimes 1} & & (AB)\otimes (YA) \ar[d]^-{M_{\ell}} \\
(XB)\otimes (YX) \ar[rr]_-{M_r} & & (YB)}
\end{aligned}
\end{equation}

\begin{equation}\label{EMM2}
\begin{aligned}
\xymatrix{
& (BC)\otimes ((AB)\otimes (XA)) \ar[rd]^-{ 1\otimes M_{\ell}}  & \\
((BC)\otimes (AB))\otimes (XA) \ar[ru]^-{a} \ar[d]_-{M\otimes 1} & & (BC)\otimes (XB) \ar[d]^-{M_{\ell}} \\
(AC)\otimes (XA) \ar[rr]_-{M_{\ell}} & & (XC)}
\end{aligned}
\end{equation}

\begin{equation}\label{EMM3}
\begin{aligned}
\xymatrix{
& (XA)\otimes ((YX)\otimes (ZY)) \ar[rd]^-{ 1\otimes M}  & \\
((XA)\otimes (YX))\otimes (ZY) \ar[ru]^-{a} \ar[d]_-{M_r\otimes 1} & & (XA)\otimes (ZX) \ar[d]^-{M_r} \\
(YA)\otimes (ZY) \ar[rr]_-{M_r} & & (ZA)}
\end{aligned}
\end{equation}

\begin{equation}\label{EMM4}
\begin{aligned}
\xymatrix{
(XA)\otimes I \ar[rr]^-{1\otimes j}  && (XA)\otimes (XX) \ar[d]^-{M_r} \\
(XA) \ar[rr]_-{1} \ar[u]^-{r} && (XA)}
\end{aligned}
\end{equation}

\begin{equation}\label{EMM5}
\begin{aligned}
\xymatrix{
(AA)\otimes (XA) \ar[rr]^{M_{\ell}}   && (XA) \\
& I\otimes (XA)\ar[lu]^{j\otimes 1} \ar[ru]_{\ell}  & }
\end{aligned}
\end{equation}

\begin{remark}\label{Transport} 
Suppose we are in the monoidal skew-closed situation of Section~\ref{TensHom}. Then the data and axioms for enriched category, enriched functor and module based on the skew-monoidal category $\CV$ transform across the tensor-hom adjunction isomorphism $\pi$ to the data and axioms for enriched category, enriched functor and module based on the skew-closed category $\CV$. 
\end{remark}

\begin{remark}\label{monoidalmodulewarning}
Let $\CC$ be skew-monoidal. We ambiguously write $\CI$ for the unit $\CC$-category (it has one object $0$, $\CI (0,0)=I$, $M=\ell_I$ and $j_0 = 1_I$).  A module $\Phi : \CI \lra \CA$ is precisely a {\em right $\CA$-module}. 
However, we have a word of warning as in Remark~\ref{closedmodulewarning}. A module $\Phi : \CA \lra \CI$ is less general than a {\em left $\CA$-module}. If $r:X\lra X\otimes I$ is invertible in $\CC$ for all objects $X$ then the notions agree.   
\end{remark}

\section{Skew-promonoidal categories and convolution}\label{SkProConv}

Promonoidal categories and convolution were introduced in Day \cite{DayPhD}; also see \cite{DayConv}. As pointed out in \cite{SkMon}, this carries over to the skew context.

A {\em (left) skew-promonoidal category} is a category $\CT$ equipped with functors 
$$P: \CT^{\mathrm{op}}\times \CT^{\mathrm{op}}\times \CT \lra \mathrm{Set} \ , \quad \ J:\CT \lra \mathrm{Set}$$
and natural transformations 
$$a:\int^XP(A,X;D)\times P(B,C;X) \lra \int^X P(X,C;D)\times P(A,B;X) \ ,$$
$$\ell : \CT (A,B) \lra \int^X JX\times P(X,A;B) \ ,$$
$$r:\int^X P(A,X;B)\times JX \lra \CT (A,B)$$
satisfying the following five axioms where we write $(ABC)$ for $P(A,B;C)$, use an Einstein-like notation $SX \times TX$ for $\int^XSX \times TX$ (so that integrating over repeated variables is understood), and even omit the symbol $\times$.
\begin{equation}\label{SPMC1}
\begin{aligned}
\xymatrix{
(AXE)(BYX)(CDY) \ar[r]^-{ a\times 1} \ar[d]_-{ 1\times a}  & (XYA)(ABX)(CDY) \ar[d]^-{\cong} \\
(AYE)(XDY)(BCX)\ar[d]_-{a \times 1} & (XYA)(CDY)(ABX) \ar[d]^-{a \times 1} \\
(YDE)(AXY)(BCX) \ar[r]_-{1\times a} &(YDE)(XCY)(ABX)}
\end{aligned}
\end{equation}

\begin{equation}\label{SPMC2}
\begin{aligned}
\xymatrix{
(AXD)(BCX)JB \ar[r]^-{ a\times 1}  & (XCD)(ABX)JB \ar[d]^-{1\times r} \\
(AXD)JB(BCX) \ar[u]^-{\cong} & (XCD)\CT (A,X) \ar[d]^-{\cong} \\
(AXD)\CT (C,X) \ar[u]^-{1\times \ell} \ar[r]_-{1\times a} & (ACD)}
\end{aligned}
\end{equation}

\begin{equation}\label{SPMC3}
\begin{aligned}
\xymatrix{
JA(AXD)(BCX) \ar[r]^-{ 1\times a}  & JA(XCD)(ABX)\ar[r]^-{ \cong} & (XCD)JA(ABX)  \\
(AXD)JB(BCX) \ar[u]^-{\cong} && (XCD)\CT (A,X) \ar[u]_-{1\times \ell}\\
\CT (X,D)(BCX) \ar[u]^-{\ell \times 1} \ar[rr]_-{\cong} & & (BCD) \ar[u]_-{\cong}}
\end{aligned}
\end{equation}

\begin{equation}\label{SPMC4}
\begin{aligned}
\xymatrix{
(AXD)(BCX)JC \ar[r]^-{ a\times 1} \ar[d]_-{ 1\times r}  & (XCD)(ABX)JC \ar[d]^-{\cong} \\
(AXD)JB(BCX) \ar[d]_-{\cong}  & (XCD)JC(ABX) \ar[d]^-{r\times 1} \\
(ABD) \ar[r]_-{\cong} & \CT (X,D)(ABX)}
\end{aligned}
\end{equation}

\begin{equation}\label{SPMC5}
\begin{aligned}
\xymatrix{
JA(AXB)JX \ar[r]^-{ 1\times r}   & JA\CT (A,B) \ar[d]^-{\cong} \\
\CT (X,B)JX \ar[r]_-{\cong} \ar[u]^-{\ell \times 1}  & JB} 
\end{aligned}
\end{equation}

The next proposition provides many examples of skew-promonoidal categories. 
We follow that with an example not covered by the proposition. 

\begin{proposition}
A skew-promonoidal category $\CT$ in which $P(-,?;C)$ and $J$ are representable is a skew-monoidal category. A skew-promonoidal category $\CT$ in which $P(A,-;?)$ and $J$ are representable is a skew-closed category.  
\end{proposition}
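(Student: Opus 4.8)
The plan is to verify the two assertions separately, each by unwinding the representability hypothesis and matching the resulting natural transformations against the axioms established earlier in the excerpt. For the first assertion, suppose $P(-,?;C)$ and $J$ are representable, say $P(A,B;C)\cong\CT(A\otimes B,C)$ and $JX\cong\CT(I,X)$ for functors $-\otimes-:\CT\times\CT\lra\CT$ and an object $I$. Under these isomorphisms the coend appearing in the source of $a$ becomes, by the co-Yoneda lemma, $\CT((A\otimes B)\otimes C,D)$ and its target becomes $\CT(A\otimes(B\otimes C),D)$; since $a$ is natural in $D$, the ordinary Yoneda lemma turns it into a natural transformation $(A\otimes B)\otimes C\lra A\otimes(B\otimes C)$. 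Similarly $\ell$ and $r$ become, after applying co-Yoneda to eliminate the coends, natural transformations $I\otimes A\lra A$ and $A\lra A\otimes I$. It then remains to check that the five skew-promonoidal axioms \eqref{SPMC1}--\eqref{SPMC5} translate, under these Yoneda/co-Yoneda bijections, precisely into the five skew-monoidal axioms. This is a bookkeeping argument: each coend-diagram axiom is a commuting polygon in $\mathrm{Set}$-valued functors, and representability lets one replace it by the corresponding polygon of representing objects.

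For the second assertion, suppose instead $P(A,-;?)$ and $J$ are representable, say $P(A,B;C)\cong\CT(B,[A,C])$ for a functor $[-,-]:\CT^{\mathrm{op}}\times\CT\lra\CT$ and $JX\cong\CT(I,X)$ as before. Now the co-Yoneda lemma must be applied in the variable over which we are integrating, which is the one occurring in a contravariant slot of $[-,-]$; the effect is that $\int^X SX\times P(A,B;X)$ with $SX$ contravariant in $X$ becomes $\CT(B,[A,\text{-}])$-shaped, and one works out that $a$, $\ell$, $r$ become natural transformations $i:[I,B]\lra B$, $j:I\lra[A,A]$, and $L:[B,C]\lra[[A,B],[A,C]]$ respectively — indeed, comparing with Section~\ref{TensHom}, the diagrams \eqref{SPMC1}--\eqref{SPMC5} should match up with the diagrams that there relate $a$, $\ell$, $r$ to $L$, $j$, $i$. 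Again the core of the argument is that the five promonoidal axioms become the five skew-closed axioms \eqref{SCC1}--\eqref{SCC5} after this translation.

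Concretely, I would first fix the representability isomorphisms and record the induced $\otimes$ (resp.\ $[-,-]$) and $I$; second, apply the co-Yoneda lemma to each coend in the domains/codomains of $a$, $\ell$, $r$ to obtain hom-set descriptions; third, apply the ordinary Yoneda lemma in the remaining free variable to extract the actual natural transformations $a,\ell,r$ (resp.\ $i,j,L$); fourth, translate each of \eqref{SPMC1}--\eqref{SPMC5} and identify it with the appropriate skew-monoidal (resp.\ skew-closed) axiom. In the closed case one can shortcut the last step by invoking the bijections of Section~\ref{TensHom} directly, once one checks that the promonoidal $a,\ell,r$ correspond under $\pi$ to the monoidal $a,\ell,r$ there.

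The main obstacle is the second assertion, and specifically the careful tracking of variance when applying the co-Yoneda lemma to a coend whose integration variable sits in the contravariant argument of the representing functor $[-,-]$: one must be sure that $\int^X P(A,X;D)\times P(B,C;X)$, with $P(A,X;D)\cong\CT(X,[A,D])$ contravariant in $X$ and $P(B,C;X)\cong\CT(C,[B,X])$ covariant in $X$, collapses correctly — the coend over a covariant-times-contravariant pair is genuine and yields $\CT(C,[B,[A,D]])$ only after one uses that $[B,-]$ need not preserve it, so in fact one should instead integrate out the variable appearing covariantly, obtaining the value at the representing object of the contravariant factor. Getting the direction of every arrow right here, and confirming that the resulting $i$, $j$, $L$ are exactly the ones whose axioms are \eqref{SCC1}--\eqref{SCC5} rather than some twisted variant, is the delicate point; once the dictionary is correctly set up, the axiom-by-axiom verification is routine diagram-chasing of the same flavour as Proposition~\ref{closed+monoidal}.
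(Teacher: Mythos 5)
Your overall strategy is the right one (and is presumably what the author had in mind, since the paper states this proposition without proof): fix the representing isomorphisms, collapse the coends in the domains and codomains of $a$, $\ell$, $r$ by co-Yoneda, extract the structure maps by Yoneda, and transcribe the five axioms. For the monoidal half this works, except that you have interchanged source and target: $\int^X P(A,X;D)\times P(B,C;X)\cong\CT(A\otimes(B\otimes C),D)$ while $\int^X P(X,C;D)\times P(A,B;X)\cong\CT((A\otimes B)\otimes C,D)$, so $a$ becomes a natural map $\CT(A\otimes(B\otimes C),D)\lra\CT((A\otimes B)\otimes C,D)$; your stated direction for the associator is correct only after this correction.

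The genuine gap is in the closed half, precisely the point you flag as delicate. The representability there must be taken in the \emph{first} (contravariant) variable, $P(A,B;C)\cong\CT(A,[B,C])$, in line with the convention $\pi:\CV(A\otimes B,C)\cong\CV(A,[B,C])$ of Section~\ref{TensHom}. With your choice $P(A,B;C)\cong\CT(B,[A,C])$ the transcription breaks: $\ell$ collapses to a natural map $\CT(A,B)\lra\CT(A,[I,B])$, i.e.\ a family $B\lra[I,B]$ rather than $i:[I,B]\lra B$; and in $a$ it is the \emph{source} coend $\int^X\CT(X,[A,D])\times\CT(C,[B,X])$ that collapses (to $\CT(C,[B,[A,D]])$) while the target does not, so no $L:[B,C]\lra[[A,B],[A,C]]$ can be extracted and the axioms cannot be matched to \eqref{SCC1}--\eqref{SCC5}. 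With the correct convention, $r$ collapses to $\CT(A,[I,B])\lra\CT(A,B)$ and Yoneda in $A$ gives $i$; $\ell$ becomes $\CT(A,B)\lra\CT(I,[A,B])$, natural in \emph{both} variables, and $j$ is obtained by Yoneda in $B$ together with naturality in $A$ (a parametrised Yoneda argument, not a one-step Yoneda in a single free variable); and $a$ becomes a map out of the non-collapsing coend into $\CT(A,[B,[C,D]])$, i.e.\ a dinatural family, from which $L$ is extracted by Yoneda first in $A$, then in $B$. So the dictionary is $r\leftrightarrow i$, $\ell\leftrightarrow j$, $a\leftrightarrow L$. Finally, your proposed shortcut via the bijections of Section~\ref{TensHom} is unavailable in this half: those bijections presuppose the adjunction $\pi$, i.e.\ the existence of the tensor, which is exactly what is not assumed in the second assertion.
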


\begin{example}
Take any category $\CC$ and any object $Z$ in it.
Define $$P(A,B;C) = \CC (A,Z)\times \CC(B,C)$$ and $$JC = \CC (Z,-) \ .$$
Using Yoneda, we have 
$$\int^XP(A,X;D)\times P(B,C;X) \cong \CC(A,Z)\times \CC (C,D)\times \CC (B,Z)$$
and
$$\int^X P(X,C;D)\times P(A,B;X) \cong \CC(B,Z)\times \CC (C,D)\times \CC (A,Z) \ ,$$
and so an obvious switch isomorphism for the natural transformation $a$. 
Also by Yoneda,
$$\int^X JX\times P(X,A;B) \cong \CC (Z,Z)\times \CC (A,B)$$
and
$$\int^X P(A,X;B)\times JX\cong \CC (A,Z) \times \CC (Z,B) \ ,$$
and so we put $\ell (h) = (1_Z , h)$ and $r(f,g)=gf$.
This defines a Hopf skew-promonoidal structure on $\CC$ for every object $Z$. 
\end{example}

We shall give two examples of convolution on particular skew-promonoidal categories.

The left skew-monoidal cocompletion of a (small) left skew-monoidal category $\CC$ is the functor (presheaf) category 
$$\hat{\CC} = [\CC^{\mathrm{op}},\mathrm{Set}] \ . $$ 
The skew-monoidal structure on $\hat{\CC}$ is defined by the convolution tensor product formula (see \cite{CWM} for the integral notation of Yoneda for ends and coends):
\begin{equation}
\begin{aligned}
(M\ast N)C = \int^{A,B} \CC (C,A\otimes B)\times MA \times NB
\end{aligned}
\end{equation}
with unit defined by
\begin{equation}
\begin{aligned}
JC = \CC (C,I) \ .
\end{aligned}
\end{equation}

The Yoneda embedding $\mathrm{y} : \CC \lra \hat{\CC}$ is strong monoidal. 
This means the skew-monoidal structure on $\hat{\CC}$ restricts along
this fully faithful functor $\mathrm{y}$ to the given such structure on $\CC$, up to isomorphism.  

In fact, $\hat{\CC}$ is closed; the left internal hom is defined by the formula
\begin{equation}
\begin{aligned}{
[N,K]A = \int_{B,C} [\CC (C,A\otimes B)\times NB,KC] \ ,}
\end{aligned}
\end{equation} 
where the square brackets denote the set of functions. By Proposition~\ref{closed+monoidal}, $\hat{\CC}$ is left skew-closed and we can speak of a $\hat{\CC}$-category $\CA$ in the sense of Section~\ref{Scc-en}. By Remark~\ref{Transport}, such an $\CA$ is bijectively equivalent to a $\hat{\CC}$-category $\CA$ in the sense of Section~\ref{Smc-en}. Moving from $\CC$ to $\hat{\CC}$ then allows us to deduce external and strong Yoneda lemmas for categories enriched in the skew-monoidal category $\CC$. Of course, these lemmas can be proved directly without such a transcendental argument. We leave the reader to formulate those lemmas.

For the second example, suppose $\CV$ is a (small) skew-closed category. There is a convolution closed \textbf{right} skew-monoidal structure on the functor category
$$ [\CV,\mathrm{Set}] $$ 
defined by
\begin{equation}
\begin{aligned}
{(M\ast N)C = \int^{B} M[B,C] \times NB}
\end{aligned}
\end{equation}
with unit defined by
\begin{equation}
\begin{aligned}
{JC = \CV (I,C) \ .}
\end{aligned}
\end{equation}
The right internal hom is defined by the formula
\begin{equation}
\begin{aligned}
{[M,K]B = \int_{C} [M[B,C],KC]   \  .}
\end{aligned}
\end{equation}
In other words,
\begin{equation}
\begin{aligned}
{[M,K]B =  [\CV, \mathrm{Set}] (M[B,-],K-) \  .}
\end{aligned}
\end{equation}
The left unit constraint $\ell_M :M\lra J\ast M$ is defined by 
$$MC \stackrel{j_C\times 1}\lra \CV (I,[C,C])\times MC \stackrel{\mathrm{copr}_C}\lra \int^B \CV(I,[B,C])\times MB \ .$$
The right unit constraint $r_M : M\ast J = M[I,-] \lra M$ is $Mi$.
The associativity constraint $a:M\ast (N\ast K)\lra (M\ast N)\ast K$ is induced by the composite of
\begin{equation*}
\begin{aligned}
\xymatrix{
M[B,C]\times N[E,B]\times KE \ar[rr]^{ML_{B,C}^E\times 1\times 1\phantom{AAA}}  & &  M[[E,B],[E,C]]\times N[E,B]\times KE   
}
\end{aligned}
\end{equation*}
with the coprojection $\mathrm{copr}_{[E,B], E}$ into the coend 
$$((M\ast N)\ast K)C=\int^{D,E} M[D,[E,C]]\times ND\times KE \ .$$
\begin{center}
--------------------------------------------------------
\end{center}

\appendix


\begin{thebibliography}{000}

\bibitem{ChLaSt} Dimitri Chikhladze, Stephen Lack and Ross Street, \textit{Hopf monoidal comonads}, Theory and Applications of Categories \textbf{24(19)} (2010) 554--563.\label{ChLaSt} 

\bibitem{DayPhD} Brian J. Day, Construction of Biclosed Categories (PhD Thesis, UNSW, 1970) <http://www.math.mq.edu.au/~street/DayPhD.pdf>.\label{DayPhD} 

\bibitem{DayConv} Brian J. Day, \textit{On closed categories of functors}, Lecture Notes in Mathematics \textbf{137} (Springer-Verlag, 1970) 1--38.\label{DayConv}

\bibitem{QCat} Brian J. Day and Ross Street, \textit{Quantum categories, star autonomy, and quantum groupoids}, in ``Galois Theory, Hopf Algebras, and Semiabelian Categories'', Fields Institute Communications \textbf{43} (American Math. Soc. 2004) 187--226.\label{QCat}

\bibitem{EilKel1966} Samuel Eilenberg and G. Max Kelly, \textit{Closed categories}, Proceedings of the Conference on Categorical Algebra (La Jolla, 1965), (Springer-Verlag,1966) 421--562.\label{EilKel1966}

\bibitem{Kelly1964} G. Max Kelly, \textit{On MacLane's conditions for coherence of natural associativities, commutativities, etc.}, Journal of Algebra \textbf{1} (1964) 397--402.\label{Kelly1964}

\bibitem{KellyBook} G. Max Kelly, \textit{Basic concepts of enriched category theory}, London Mathematical Society Lecture Note Series \textbf{64} (Cambridge University Press, Cambridge, 1982). \label{KellyBook}

\bibitem{KelSt1974} G. Max Kelly and Ross Street, \textit{Review of the elements of 2-categories}, Lecture Notes in Mathematics \textbf{420} (Springer-Verlag, 1974) 75--103.\label{KelSt1974}

\bibitem{SkMon} Stephen Lack and Ross Street, Skew monoidales, skew warpings and quantum categories, arXiv:1205.0074 (2012).\label{SkMon}

\bibitem{Rice} Saunders Mac Lane, \textit{Natural associativity and commutativity}, Rice Univ. Studies \textbf{49} (1963) 28--46.\label{Rice}

\bibitem{CWM} Saunders Mac Lane, \textit{Categories for the Working Mathematician}, Graduate Texts in Mathematics \textbf{5} (Springer-Verlag, 1971).\label{CWM} 

\bibitem{FTM} Ross Street, \textit{The formal theory of monads}, Journal of Pure and Applied Algebra \textbf{2} (1972) 149--168.\label{FTM} 

\bibitem{Szl2003} Kornel Szlach\'anyi, \textit{The monoidal Eilenberg-Moore construction and bialgebroids}, Journal of Pure and Applied Algebra \textbf{182} (2003) 287--315.\label{Szl2003}

\bibitem{Szl2012} Kornel Szlach\'anyi, \textit{Skew-monoidal categories and bialgebroids}, arXiv:1201.4981v1 (2012).\label{Szl2012}

\bibitem{Takeuchi1977} M. Takeuchi, \textit{Groups of algebras over $A\otimes \bar{A}$}, Journal of the Mathematical Society of Japan \textbf{29} (1977) 459--492.\label{Takeuchi1977} 

\end{thebibliography}
\end{document}